\def\newaliasedtheorem#1[#2]#3{
  \newaliascnt{#1@alt}{#2}
  \newtheorem{#1}[#1@alt]{#3}
  \expandafter\newcommand\csname #1@altname\endcsname{#3}
}
\theoremstyle{plain}
\newtheorem{theorem}{Theorem}[section]
\theoremstyle{remark}
\theoremstyle{definition}
\theoremstyle{remark}
\numberwithin{equation}{section}
\def\eps{\varepsilon}
\def\R{\mathbb R}
\def\N{{\mathbb N}}
\DeclareMathOperator{\spt}{spt}
\DeclareMathOperator{\loc}{loc}
\DeclareMathOperator{\diag}{diag}
\DeclareMathOperator{\id}{id}
\DeclareMathOperator{\intt}{int}
\DeclareMathOperator{\rank}{rank}
\DeclareMathOperator{\Sym}{Sym}
\DeclareMathOperator{\dist}{dist}
\author[G. De Philippis and  R. Tione]{Guido De Philippis \and Riccardo Tione}
\address{G.~De Philippis: \hfill\break Courant Institute of Mathematical Sciences, New York University, 251 Mercer St., New York, NY 10012, USA.}
\email{guido@cims.nyu.edu}
\address{Riccardo Tione
\hfill\break Max Planck Institute for Mathematics in the Sciences, Inselstrasse 22, 04103 Leipzig, Germany}
\email{riccardo.tione@mis.mpg.de}
\thanks{{\bf Acknowledgments.} 
	G.D.P has been partially supported by the NSF grant DMS 2055686 and by the Simons Foundation.}
\title{On Singular strictly convex solutions to the Monge-Amp\`ere equation}
\subjclass[2020]{35J96-35J70-35D30}
\keywords{Degenerate Monge-Amp\`ere equation, singular solutions, convex integration, staircase laminates}
\begin{document}

\maketitle

\begin{abstract}
We show the existence of a strictly convex function $u: B_1 \to \R$ with associated Monge-Amp\`ere measure represented by a function $f$ with $0 < f < 1$ a.e. whose Hessian has a singular part. This extends the work \cite{MON} and answers an open question of \cite[Sec. 6.2(1)]{MONSUR}.
\end{abstract}

\section{Introduction}

This paper is concerned with (low) regularity of convex solutions $u: B_1 \to \R$, $B_1 \subset \R^n$, $n \ge 2$, to the Monge-Amp\`ere equation
\begin{equation}\label{MA}
\begin{cases}
\det D^2 u = f, &\text{ in } B_1,\\
u = 0,& \text{ on } \partial B_1.
\end{cases}
\end{equation}
Here solutions are intended in the  Alexandrov sense, so that 
\[
\det D^2 u = f
\] 
shall be interpreted as 
\[
\mu_u=f dx
\] 
where \(\mu_u\) is the Monge-Amp\`ere measure associated to $u$, see \cite{FIG}. When $0< \delta \le f \le 1$ one can  show that solutions of \eqref{MA} belongs to \(W^{2,1}\) (meaning that the \(\Sym(2)\)-valued measure representing the Hessian of $u$ has no singular part)  and indeed $u \in W^{2,1 + \eps}_{\loc}(B_1)$, for some $\eps(\delta) > 0$,\cite{DPA,DPFS,SCHI}. 

When \(n=2\) and  \(0\le f \le 1\), it is a well known theorem of Alexandrov that  strictly convex  solutions of \eqref{MA} are indeed \(C^1\), see for instance \cite[Theorem 2.17]{FIG}. In view of applications to the semigeostrophic equations it was asked in \cite[Sec. 6.2(1)]{MONSUR} whether solution of \eqref{MA} with \(f\le 1\) can have a singular part of the Hessian measure. In \cite{MON}, Mooney constructed a convex solution to \eqref{MA} whose Hessian has a nontrivial Cantor part, via a sophisticated extension procedure of a convenient one-dimensional convex function. The graph of solutions constructed  however contains open segments, leaving open the question whether an  example can be constructed with  $u$ strictly convex. The main result of this note shows that it is indeed possible.

\begin{theorem}\label{counterex}
There exists a strictly convex $u\in C^1(\overline {B_1})$ with $Du \in BV\cap C^0(\overline {B_1})$ and $u = 0$ on $\partial B_1$ such that
\[
\mu_u = fdx \quad 0 < f \le 1 \text{ a.e.},
\]
but
\[
u \notin W^{2,1}(B_1).
\]
\end{theorem}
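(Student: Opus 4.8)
The plan is to realize the Hessian constraint \(D^2u\in K:=\{A\in\Sym(2):A\ge0,\ 0<\det A\le1\}\) by a convex integration along a \emph{staircase laminate}, and then to read off all the claimed properties from the structure of that laminate; we detail the case \(n=2\). Recall first that for a convex \(u\) the absolutely continuous part of \(\mu_u\) equals \(\det(D^2_{ac}u)\,dx\), where \(D^2_{ac}u\) denotes the a.e.\ defined pointwise Hessian, while the singular part of \(\mu_u\) is a nonnegative measure carried by the set where \(u\) is not twice differentiable; see \cite{FIG}. Hence it suffices to build a convex \(u\in C^1(\overline{B_1})\), with \(u=u_0:=c(|x|^2-1)\) on \(\partial B_1\) (so \(u=0\) there and \(D^2u_0=2c\,\Id\) with \(0<4c^2\le1\)), such that: \textup{(a)} \(D^2_{ac}u\in K\) a.e.; \textup{(b)} the distributional Hessian \(D^2u\) has a nonzero singular part, so \(u\notin W^{2,1}\); \textup{(c)} this singular part is \(\mu_u\)-negligible, so that \(\mu_u=f\,dx\) with \(f=\det D^2_{ac}u\), \(0<f\le1\) a.e.; and \textup{(d)} \(u\) is strictly convex, i.e.\ its graph contains no segment.

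For the construction I would first produce a sequence of laminates of finite order \(\nu_k\) in \(\Sym(2)\), all with barycenter \(2c\,\Id\) and all supported in \(K\), such that a fixed fraction \(\theta\in(0,1)\) of the mass is transported, along rank-one segments and step by step, towards the ``corner at infinity'' of \(K\) --- the region where the largest eigenvalue diverges and the smallest vanishes while the product stays \(\le1\). Each elementary step splits the current extreme matrix into a large fraction that \emph{settles} onto a matrix of \(K\) (this mass produces the absolutely continuous part of \(D^2u\), hence the density \(f\), automatically positive a.e.\ and \(\le1\)) and a small fraction pushed slightly further out; alternating splittings that enlarge one eigenvalue with splittings that shrink the other keeps \(\det\le1\) all along the staircase, and inserting rotations makes the escaping matrices of the form \(\approx\rho\, v\otimes v\) with the rank-one direction \(v=v(x)\) \emph{genuinely varying}, rather than frozen to a fixed axis. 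I would then feed the \(\nu_k\) into the classical realization of laminates of finite order by piecewise quadratic functions: inductively, \(u_{k+1}\) is obtained from \(u_k\) by replacing, on each region where \(D^2u_k\) is constant, the affine gradient with a finely oscillating, interiorly supported \(C^1\) correction realizing the \((k+1)\)-th splitting, the pieces being glued along their rank-one interfaces. Taking oscillation scales and amplitudes small enough makes \((u_k)\) converge in \(C^1(\overline{B_1})\) to a convex \(u\) with \(u=u_0\) near \(\partial B_1\), and \(D^2u_k\overset{*}{\rightharpoonup}D^2u\) as \(\Sym(2)\)-valued measures. Since each \(u_k\) is convex and agrees with \(u_0\) near \(\partial B_1\), one has \(\int_{B_1}\tr(D^2u_k)=\int_{B_1}\Delta u_0=4c|B_1|\), so \(\|D^2u_k\|_{L^1}\le4c|B_1|\) and \(D^2u\) is a finite measure; hence \(Du\in BV\), and \(Du\in C^0(\overline{B_1})\) as the uniform limit of the \(\nabla u_k\). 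The fraction \(\theta\) escaping to infinity is carried, at stage \(k\), by sets of measure tending to \(0\) on which the values of \(D^2u_k\) blow up, their product staying comparable to \(\theta\); this forces \(D^2u\) to concentrate on a Lebesgue-null set, i.e.\ to have a nonzero singular part, giving \textup{(b)}, while \textup{(a)} holds because the settled matrices stay in \(K\) and the concentration does not pollute the absolutely continuous part.

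The remaining points \textup{(c)} and \textup{(d)} --- which is exactly where the construction must go beyond \cite{MON} --- are the heart of the matter. If the escaping mass were pushed along a fixed axis, the singular part of \(D^2u\) would be of the form \(\sigma\otimes(e\otimes e)\) with \(e\) fixed and \(\sigma\) a Cantor-type measure, and a direct computation (as one checks on a product model \(u(x)=g(x_1)+w(x)\) with \(g\) convex and \(w\) strictly convex) shows such a singular Hessian does charge \(\mu_u\); moreover this rigid, cylindrical structure is precisely what forces line segments into the graph, as in Mooney's solution. Arranging instead, via the rotations, that the direction field \(v(x)\) carries the singular part tangentially to a Cantor-type family of \emph{curved} leaves, the plan for \textup{(c)} is to show that \(\nabla u\) maps the singular set \(N\) onto an \(\mathcal L^2\)-negligible set: the singular part of \(D^2u\) being rank-one and tangent to the leaves, along almost every leaf \(\nabla u\) has finite total variation --- the tangential second derivative is the \(L^1\) absolutely continuous part plus the finite singular measure carried by that leaf --- so \(\nabla u\) maps it onto a rectifiable curve of finite length; summing over leaves yields \(|\nabla u(N)|=0\), hence \(\mu_u^s=0\). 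For \textup{(d)} the plan is to prove that every contact set of \(u\) with one of its supporting hyperplanes reduces to a point: off \(N\) a lower-dimensional contact set is excluded by combining \(\det D^2_{ac}u>0\) a.e.\ with the curvature that the staircase introduces at every location and at a dense set of scales, which also prevents any segment from aligning with \(N\); hence no segment survives in the graph. I expect the quantitative control needed to run \textup{(c)} and \textup{(d)} --- the \(\mu_u\)-negligibility of the curved singular set, and the strict convexity of the \(C^1\) limit --- to be the main obstacle.
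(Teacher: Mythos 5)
Your first half (staircase laminates supported where $0<\det\le 1$, realization by piecewise quadratic maps, $C^1$ convergence, the $BV$ bound via $\int\tr D^2u_k=\int\Delta u_0$, and concentration along the escaping branch to kill $W^{2,1}$) matches the paper's strategy. But the two points you yourself flag as ``the heart of the matter'' --- that $\mu_u$ is absolutely continuous with $0<f\le1$, and that $u$ is strictly convex --- are left as plans, and the plans you sketch have genuine gaps. For absolute continuity you propose to curve the rank-one direction along a Cantor family of leaves and deduce $|\nabla u(N)|=0$ by summing, over the leaves, the lengths of their rectifiable images. This does not work as stated: each leaf-image is $\mathcal L^2$-null, but the union over an uncountable family of leaves need not be, and a gradient that is merely $BV\cap C^0$ has no Lusin-type property that would convert ``$N$ is Lebesgue-null'' into ``$\nabla u(N)$ is null''; moreover no construction of the leaves or of the rotated laminate is given. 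The paper avoids all of this with one observation you missed: the Monge--Amp\`ere measure is weak-$*$ stable under locally uniform convergence of convex functions (\cite[Prop.~2.6]{FIG}), and each approximation satisfies $\mu_{u_j}=\det(D^2u_j)\,dx$ with $0<\det(D^2u_j)<1$ pointwise because every matrix in the laminates has determinant in $(0,1)$; hence the limit measure automatically has the form $f\,dx$ with $f\le1$, and $f>0$ a.e.\ because off the (null, closed) set $\Omega_\infty=\bigcap_j\overline\Omega_j$ the limit coincides locally with some $u_{j_0}$, whose Hessian is bounded below. In particular your background claim that a fixed escaping direction forces $\mu_u$ to be charged (or forces segments) is not correct in general: the paper works entirely with diagonal matrices, so the blow-up direction is the fixed axis $e_2\otimes e_2$, and no rotations are needed.

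The same mechanism settles strict convexity, where your argument (``curvature at a dense set of scales'' plus $\det D^2_{ac}u>0$ a.e.) is not a proof --- a.e.\ positivity of the pointwise determinant does not exclude a segment of affinity, whose contact set may be null. In the paper, at every finite stage all matrices appearing in the laminates are positive definite, so $D^2u_j\ge\rho_j\,\mathrm{Id}$ and each $u_j$ is locally uniformly convex; the refinement regions $\Omega_j$ are chosen as finite unions of balls of radius at most $1/j$ with pairwise disjoint closures, so $\overline{\Omega_j}$ contains no segment of length $>2/j$ and hence $\Omega_\infty$ contains no nondegenerate segment; any segment on which $u_\infty$ were affine would then meet some $B_1\setminus\overline{\Omega_{j_0}}$, where $u_\infty=u_{j_0}$ is uniformly convex, a contradiction. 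Concentration of the singular part is then quantified exactly as you suggest ($y_j|\Omega_j|$ bounded below while $|\Omega_j|\to0$), so your point (b) is fine; but without the stability argument for $\mu_u$ and the ball/positivity mechanism for strict convexity, the proposal does not yet prove the theorem.
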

With the same method we may also construct $u \in C^{1,\alpha}$, having fixed any $\alpha \in (0,1)$. 

The construction is performed by exploiting  the convex integration method introduced by D. Faraco in \cite{MIL}, based on objects known in the literature as \emph{staircase laminates}. This is an extremely versatile method for creating pathological examples of \emph{concentration}, and has already been used in many contexts, see \cite{CFM,AFSZ,CT, FRT}. In particular, it is used in \cite{FMCO} in connection with the degenerate Monge-Amp\`ere equation
\[
\det(D^2u) = 0 \quad \text{a.e. in }B_1. 
\]
Similar constructions have also been considered in \cite{LM,LMP}. 

\subsection{Notation}

We let $\Sym(2)$ be the space of $2\times 2$ symmetric matrices. 
For any set $E \subset \R^2$, we denote by $\overline E$ its closure, by $\partial E$ its topological boundary and by $\intt E$ its interior. We denote by $|E|$ the 2-dimensional Lebesgue measure of a measurable set $E \subset \R^2$.
\\
\\
For an open set  $\Omega \subset \R^2$ we  say that a function $u: \Omega \to \R$ is piecewise quadratic if it is continuous and there exist countably many pairwise disjoint open sets $\Omega_n \subset \Omega$ with
\[
\left|\Omega\setminus \bigcup_n\Omega_n\right| = 0
\]
and $\left. u\right|_{\Omega_n}$ is a polynomial of second order. For a couple $(x,y) \in \R^2$, we set 
\[
\diag(x,y) \doteq \left(
\begin{array}{cc}
x & 0 \\
0 & y
\end{array}
\right).
\]
We denote by $a\cdot b $ the standard scalar product of $a,b \in \R^2$.
\section{Laminates of finite order}

We wish to use the convex integration methods of \cite{CFM,AFSZ}. We will say that $B,C \in \Sym(2)$ are rank-one connected if
\[
\rank(B-C) = 1.
\] 
The building block of these methods is given by the following:

\begin{lemma}\label{simple}
Let $\Omega \subset \R^2$ be an open and bounded domain. Consider $B,C \in \Sym(2)$ with $\rank(B-C) = 1.$ For any $\lambda \in [0,1]$, let 
\[
A \doteq \lambda B + (1-\lambda)C
\]
Fix any $b \in \R^2$, $c \in \R$ and let $u_0(x)\doteq \frac{1}{2}Ax\cdot x + b\cdot x + c$. Then, for all $\eps >0$, there exists a piecewise quadratic function $u_\eps \in W^{2,\infty}(\Omega)$ with the following properties:
\begin{enumerate}
\item $u_\eps(x) = \frac{1}{2}Ax\cdot x + b\cdot x + c$, $Du_\eps(x) = Ax + b$ on $\partial \Omega$ and $\|u_\eps - u_0\|_{C^1(\overline\Omega)}\le \eps$;
\item $D^2 u_\eps(x) \in B_{\varepsilon}(B)\cup B_{\varepsilon}(C)$ for a.e. $x \in \Omega$;
\item\label{prop:thi} $|\{x \in \Omega: D^2u_\eps(x) = B\}| \ge (1-\eps)\lambda|\Omega|, |\{x \in \Omega: D^2u_\eps(x) =C\}| \ge (1-\eps)(1-\lambda)|\Omega|$;
\item\label{prop:fou} $|\{x \in \Omega: D^2u_\eps(x) \in B_\eps(B)\}| \le (1+\eps)\lambda|\Omega|, |\{x \in \Omega: D^2u_\eps(x)  \in B_\eps(C)\}| \le (1+\eps)(1-\lambda)|\Omega|$;
\end{enumerate}
\end{lemma}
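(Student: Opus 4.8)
The plan is to obtain $u_\eps$ from $u_0$ by superimposing a small, fast oscillation whose Hessian jumps, across hyperplanes orthogonal to the rank-one direction of $B-C$, between $B$ and $C$. The affine term $b\cdot x+c$ plays no role: it suffices to treat $b=0$, $c=0$ and then add it back, since this changes neither the Hessian nor the piecewise-quadratic structure and only shifts the boundary value accordingly. If $B=C$ then $A=B=C$ and $u_\eps\doteq u_0$ works; otherwise, as $B-C\in\Sym(2)$ has rank one, write $B-C=s\,e\otimes e$ with $s\neq 0$, $|e|=1$, and after an orthogonal change of coordinates assume $e=e_1$, so that $B-A=(1-\lambda)s\,e_1\otimes e_1$ and $C-A=-\lambda s\,e_1\otimes e_1$.

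First I would fix a $1$-periodic $h\in C^{1,1}(\R)$ whose second derivative, on each period, equals $(1-\lambda)s$ on a set of measure $\lambda$ and $-\lambda s$ on a set of measure $1-\lambda$; the identity $\lambda(1-\lambda)s+(1-\lambda)(-\lambda s)=0$ lets one arrange the two slopes so that $h'$ is a continuous, piecewise-linear, $1$-periodic zig-zag, and after normalising $h,h'$ are bounded by a constant depending only on $B,C$. For $\delta>0$ the piecewise-quadratic function
\[
w_\delta(x)\doteq\tfrac12 Ax\cdot x+\delta^2 h\!\Big(\tfrac{x_1}{\delta}\Big)
\]
then satisfies $\|w_\delta-u_0\|_{C^1(\overline\Omega)}\le C\delta$ and
\[
D^2 w_\delta(x)=A+h''\!\Big(\tfrac{x_1}{\delta}\Big)e_1\otimes e_1\in\{B,C\}\qquad\text{for a.e.\ }x\in\Omega;
\]
moreover, since $\{h''=(1-\lambda)s\}$ equidistributes with density $\lambda$, the volumes of $\{D^2w_\delta=B\}$ and $\{D^2w_\delta=C\}$ differ from $\lambda|\Omega|$, $(1-\lambda)|\Omega|$ by at most $C(\Omega)\delta$. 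So $w_\delta$ already satisfies (2)--(4) up to errors vanishing with $\delta$ and is $C^1$-close to $u_0$; its only defect is $w_\delta\neq u_0$ on $\partial\Omega$.

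The heart of the matter --- and the main obstacle --- is restoring the boundary datum. One cannot just multiply $\delta^2 h(x_1/\delta)$ by a cut-off near $\partial\Omega$: any correction that brings $Du_0$ back on $\partial\Omega$ must carry a second derivative of unit size on the transition layer, forcing $D^2u_\eps$ out of $B_\eps(B)\cup B_\eps(C)$ there. Instead the oscillation should persist up to $\partial\Omega$ with amplitude decaying to $0$. The plan is to use a Whitney-type decomposition $\Omega=\bigcup_j Q_j$ into countably many pairwise disjoint open cubes with $\operatorname{diam}(Q_j)\simeq\dist(Q_j,\partial\Omega)$ and $\big|\Omega\setminus\bigcup_j Q_j\big|=0$, and on each $Q_j$ to insert a rescaled copy of the elementary laminate at a scale $\delta_j\ll\operatorname{diam}(Q_j)$, reconciling the copies on adjacent cubes within thin interface layers so that the glued $u_\eps$ is $C^1$ across common faces and matches the affine datum where $\partial Q_j$ meets $\partial\Omega$. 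Since the perturbation on $Q_j$ is $O(\dist(Q_j,\partial\Omega)^2)$ in $C^0$ and $O(\dist(Q_j,\partial\Omega))$ in $C^1$, the function $u_\eps$ extends to $\overline\Omega$ with $u_\eps=u_0$, $Du_\eps=Du_0$ on $\partial\Omega$ and $\|u_\eps-u_0\|_{C^1}\le\eps$, i.e.\ (1); off the interface layers $D^2u_\eps\in\{B,C\}$, and with the $\delta_j$ small these layers have total measure $\le\eps|\Omega|$ and keep $D^2u_\eps$ inside $B_\eps(B)\cup B_\eps(C)$, giving (2), while the volumes of $\{D^2u_\eps=B\}$ and $\{D^2u_\eps=C\}$ lie within $\eps|\Omega|$ of $\lambda|\Omega|$, $(1-\lambda)|\Omega|$, giving (3)--(4).

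The difficulty concentrated in the last step is to design the interface and boundary transition profiles so that everything holds simultaneously: genuine $C^1$ (not merely Lipschitz) regularity, amplitude $\to 0$ at $\partial\Omega$, and the Hessian confined to a fixed $\eps$-neighbourhood of $\{B,C\}$ even while interpolating among laminates of different scales and the affine datum. This is precisely the construction of \cite{MIL,CFM,AFSZ}, which I would follow, carrying along the bookkeeping of the excluded volumes to get the stated inequalities.
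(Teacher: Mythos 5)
Your interior construction (the one--dimensional zig--zag profile giving $D^2w_\delta\in\{B,C\}$ with the right proportions) and your reduction to $b=0$, $c=0$ are fine, and your diagnosis that the whole difficulty sits in matching the affine boundary datum while keeping the Hessian in $B_\eps(B)\cup B_\eps(C)$ is correct. But at that point you defer to \cite{MIL,CFM,AFSZ} for exactly the construction that the paper itself simply quotes: the proof in the paper consists of citing \cite{AFSZ} (Lemma 2.1 and its proof) for a map $v_\eps$ with properties (1)--(3), and then supplying the \emph{only} new ingredient, namely the upper bound (4). So the part of the lemma you treat as routine bookkeeping is precisely the part that is not off-the-shelf in the references and that the paper actually proves.

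Concretely, your bookkeeping claim is quantitatively insufficient: you assert that $|\{D^2u_\eps=B\}|$ and $|\{D^2u_\eps\in B_\eps(B)\}|$ differ from $\lambda|\Omega|$ by at most $\eps|\Omega|$, but (3) and (4) require errors proportional to $\lambda$ (resp.\ $1-\lambda$): an additive error $\eps|\Omega|$ gives neither $|\{D^2u_\eps=B\}|\ge(1-\eps)\lambda|\Omega|$ nor $|\{D^2u_\eps\in B_\eps(B)\}|\le(1+\eps)\lambda|\Omega|$ when $\lambda$ is small, and the multiplicative form of (4) is what the iteration in the main theorem needs. Within your scheme this is repairable by demanding interface/equidistribution errors of size $\eps\lambda(1-\lambda)|\Omega|$ rather than $\eps|\Omega|$, but you should say so and check the construction permits it; alternatively, you can avoid re-proving anything about the construction and argue as the paper does: assume WLOG $\lambda\le 1-\lambda$ and $\eps$ so small that $B_\eps(B)\cap B_\eps(C)=\emptyset$, apply the cited construction with the reduced parameter $\eps'=\eps\lambda/(1-\lambda)$, and observe that $\{D^2u_\eps\in B_{\eps'}(B)\}$ is contained in the complement of $\{D^2u_\eps=C\}$, so the lower bound (3) for $C$ gives $|\{D^2u_\eps\in B_{\eps'}(B)\}|\le\bigl(1-(1-\eps')(1-\lambda)\bigr)|\Omega|\le(1+\eps)\lambda|\Omega|$, and symmetrically for $C$. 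As written, your proposal has a genuine (if modest) gap exactly at property (4).
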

\begin{proof}
A map having the first three properties was constructed in the proof of \cite[Lemma 2.1]{AFSZ}, see in particular \cite[(4.1),(4.2),(4.3),(4.4)]{AFSZ}. Let us denote the family of functions obtained in this way by $v_\eps$. In order to show that there is a map that in addition fulfills \eqref{prop:fou}, we fix $\eps$ and $\lambda$ and assume without loss of generality that $\lambda \le 1 -\lambda$. We can also assume that $\eps >0$ is so small that $B_\eps(B) \cap B_\eps(C) = \emptyset$. Thus, having defined $\eps' \doteq \frac{\eps \lambda}{1-\lambda}$ and the map $v_{\eps'}$, we set $u_\eps \doteq v_{\eps'}$ and claim that this function has the four required properties. Since $\eps' \le \eps$, the first three properties are straightforward from the analogous properties of $v_{\eps'}$. To obtain \eqref{prop:fou}, simply estimate
\begin{align*}
|\{x \in \Omega: D^2u_\eps(x) \in B_\eps(B)\}|& = |\{x \in \Omega: D^2v_{\eps'}(x) \in B_{\eps'}(B)\}| \\
&\le |\Omega| - |\{x \in \Omega: D^2v_{\eps'}(x) = C\}| \overset{\eqref{prop:thi}}{\le} |1 -(1 - \eps')(1-\lambda)||\Omega|.
\end{align*}
Analogously,
\[
|\{x \in \Omega: D^2u_\eps(x) \in B_\eps(C)\}| \le |1-(1-\eps')\lambda||\Omega|.
\]
The fact that $\lambda \le 1-\lambda$ and our choice of $\eps' = \eps \frac{\lambda}{1-\lambda}$ implies that
\[
|1 -(1 - \eps')(1-\lambda)| \le (1 + \eps)\lambda \text{ and } |1-(1-\eps')\lambda|\le (1+\eps)(1-\lambda)
\]
and allows us to conclude the proof.
\end{proof}
This result can be easily iterated, but before stating the next proposition, we start with a definition.
\begin{definition}
Let $\nu,\mu$ be probability atomic measures on $\Sym(2)$. Let $\nu = \sum_{i= 1}^N\lambda_i\delta_{A_i}$. We say that $\mu$ can be obtained via \emph{elementary splitting from }$\nu$ if for some $i \in \{1,\dots,N\}$, there exist matrices $B,C \in \Sym(2)$, $\lambda \in [0,1]$ such that
\[
\rank(B-C) = 1, \quad A_i = sB + (1-s)C,
\]
for some $s \in (0,1)$ and
\[
\mu = \nu +\lambda\lambda_i(-\delta_{A_i} + s\delta_B + (1-s)\delta_C).
\]
A probability measure $\nu = \sum_{i= 1}^r\lambda_i\delta_{A_i}$ is called a \emph{laminate of finite order} if there exists a finite number of measures $\nu_1,\dots,\nu_r \in$ such that
\[
\nu_1 = \delta_X,\quad \nu_r = \nu
\]
and $\nu_{j + 1}$ can be obtained via elementary splitting from $\nu_j$, for every $j\in \{1,\dots,N-1\}$.
\end{definition}
A simple inductive procedure starting from Lemma \ref{simple} yields then the next result, see also \cite[Proposition 2.3]{AFSZ} or \cite[Proposition 4.6]{ST}.
\begin{lemma}\label{ind}
Let $\Omega \subset \R^2$ be an open domain. Consider a laminate of finite order $\nu = \sum_{i = 1}^r\lambda_i\delta_{A_i}$ with barycenter $A \in \Sym(2)$, i.e.: 
\[
A = \int_{\R^{2\times 2}} Xd\nu(X).
\]
Fix any $b \in \R^2$, $c \in \R$ and let $u_0(x)\doteq \frac{1}{2}Ax\cdot x + b\cdot x + c$. Then, for all $\eps >0$, there exists a piecewise quadratic function $u_\eps \in W^{2,\infty}(\Omega)$ with the following properties:
\begin{enumerate}
\item $u_\eps(x) = \frac{1}{2}Ax\cdot x + b\cdot x + c$, $Du_\eps(x) = Ax + b$ on $\partial \Omega$ and $\|u_\eps - u_0\|_{C^1(\overline\Omega)}\le \eps$;
\item $D^2 u_\eps(x) \in  \bigcup_{i = 1}^rB_{\varepsilon}(A_i)$ for a.e. $x\in \Omega$;
\item $|\{x \in \Omega: D^2u_\eps(x) = A_i\}| \ge (1-\eps)\lambda_i|\Omega|, \forall i$;
\item $|\{x \in \Omega: D^2u_\eps(x) \in B_\eps(A_i)\}| \le (1+\eps)\lambda_i|\Omega|, \forall i$.
\end{enumerate}
\end{lemma}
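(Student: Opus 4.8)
The plan is to iterate Lemma~\ref{simple} along the sequence of elementary splittings that defines the laminate of finite order $\nu$. By definition there is a chain $\nu_1 = \delta_A$ (note the barycenter of $\nu_1$ must be $A$, so the chain necessarily starts at $\delta_A$), $\nu_2, \dots, \nu_r = \nu$, where each $\nu_{j+1}$ is obtained from $\nu_j$ by picking one atom $\delta_{A_i}$ of $\nu_j$, writing $A_i = sB + (1-s)C$ with $\rank(B-C)=1$, and replacing the mass $\lambda_i$ at $A_i$ partly by mass at $B$ and at $C$. The number of splittings is finite, so a finite induction will terminate.

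The induction is on the length of the splitting chain. The base case is $\nu_1 = \delta_A$, where one simply takes $u_\eps = u_0$. For the inductive step, suppose we have built $u_\eps^{(j)}$ realizing $\nu_j = \sum_i \mu_i \delta_{X_i}$ in the sense of properties (1)--(4) with error parameter to be chosen. On each of the (countably many) pieces $\Omega_k$ on which $D^2 u_\eps^{(j)}$ is \emph{exactly} equal to the atom $X_{i_0}$ being split, apply Lemma~\ref{simple} on $\Omega_k$ with matrices $B,C$, weight $s$, and the affine data $b,c$ determined by $Du_\eps^{(j)}$ and $u_\eps^{(j)}$ at that piece; on every other piece leave $u_\eps^{(j)}$ unchanged. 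Gluing is legitimate because Lemma~\ref{simple}(1) guarantees the new local function matches the old one to first order on $\partial\Omega_k$, so the resulting $u_\eps^{(j+1)}$ is in $W^{2,\infty}(\Omega)$, is again piecewise quadratic, and still agrees with $u_0$ on $\partial\Omega$. The volume bookkeeping is the routine part: property (3) at step $j$ says the pieces where $D^2u_\eps^{(j)} = X_{i_0}$ fill at least $(1-\delta)\mu_{i_0}|\Omega|$, and Lemma~\ref{simple}(3) produces inside them mass at least $(1-\delta)s$ (resp.\ $(1-\delta)(1-s)$) of $B$ (resp.\ $C$); multiplying the two near-$1$ factors and absorbing into a slightly worse constant recovers (3) for $\nu_{j+1}$. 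Symmetrically, the upper bounds (4) at step $j$ together with Lemma~\ref{simple}(4) give (4) for $\nu_{j+1}$, again after adjusting constants. Property (2) is immediate since the new Hessian values lie in $\eps$-balls around $B$ or $C$, which are new atoms, while on untouched pieces the Hessian still lies in the old balls; and (1) follows because the $C^1$ perturbations can be made as small as we like on each $\Omega_k$ — here one must choose the local $\eps$ in Lemma~\ref{simple} small enough (depending on $k$, summably) so that the global $C^1$ norm of the increment is at most, say, $\eps 2^{-j}$.

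The one genuinely delicate point is the choice of the error parameters. Since at step $j$ we split \emph{only} on the set where the Hessian is exactly an atom (not merely close to it), the total mass we "lose" at each splitting is controlled by the $\eps$-deficits accumulated so far; because $r$ is finite, we may fix a single small $\rho>0$, run the induction with error $\rho$ at every stage, and check that after $r$ steps the compounded bounds $(1\pm\rho)^r$ are still $\le (1+\eps)$ and $\ge (1-\eps)$ once $\rho$ is taken small in terms of $\eps$ and $r$. Simultaneously, the $C^1$ closeness is handled by the freedom in Lemma~\ref{simple}(1), distributing a budget $\eps/r$ across the $r$ stages and, within each stage, a summable budget across the countably many subdomains. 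I expect this compounding/error-allocation step to be the main obstacle to write cleanly; everything else is a direct translation of Lemma~\ref{simple} through the definition of laminate of finite order, exactly as in \cite[Proposition 2.3]{AFSZ} or \cite[Proposition 4.6]{ST}.
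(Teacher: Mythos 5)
Your overall strategy (finite induction along the splitting chain, applying Lemma~\ref{simple} on the pieces carrying the atom being split, with a compounded error budget) is exactly the intended route — the paper itself only cites \cite[Proposition 2.3]{AFSZ}, \cite[Proposition 4.6]{ST} for this. However, as written your inductive step has a genuine gap in the verification of property (2). Lemma~\ref{simple} does \emph{not} produce the exact values $B,C$ almost everywhere: it leaves a set of positive measure (up to order $\eps\lambda|\Omega|$) where the Hessian is only $\eps$-close to the new atoms. So after realizing $\nu_j$, there are pieces of positive measure on which $D^2u_\eps^{(j)}$ is a constant merely \emph{near} the atom $X_{i_0}$ that is about to be split. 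Your recipe splits only the pieces where $D^2u_\eps^{(j)}=X_{i_0}$ exactly and "leaves every other piece unchanged", and then argues that on untouched pieces "the Hessian still lies in the old balls". But when the elementary splitting has weight $\lambda=1$ the atom $X_{i_0}$ disappears from $\nu_{j+1}$ (and possibly from the final $\nu$), so "the old ball" $B_{\delta'}(X_{i_0})$ need not be contained in $\bigcup_i B_\eps(A_i)$ for the atoms $A_i$ of $\nu$; no choice of small parameters repairs this, since the residual set has positive measure and $X_{i_0}$ sits at a fixed distance from $\spt\nu$. This is not a hypothetical corner case: in the paper's laminates $\nu_n$ the intermediate atom $\diag(x_{n+1},y_n)$ is split away with full weight and lies at distance of order $2^n$ from the atoms of $\nu_n$, so your construction would violate (2) there.

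The standard fix (and the missing idea) is to split \emph{all} pieces on which the Hessian is a constant $X$ with $|X-X_{i_0}|<\delta'$, not just the exact ones: on such a piece apply Lemma~\ref{simple} with the translated rank-one pair $B+(X-X_{i_0})$, $C+(X-X_{i_0})$ and the same weight $s$, which is again rank-one connected and has barycenter $X$. The exact values required for (3) are still produced on the pieces where $X=X_{i_0}$, the near pieces contribute Hessians within $\delta'+\delta''$ of $B$ or $C$ (restoring (2)), and their total measure is of order $\delta'$, so it is absorbed into the $(1+\eps)$ slack in (4). A second, more minor omission: for an elementary splitting with weight $\lambda\in(0,1)$ only a $\lambda$-fraction of the mass at $X_{i_0}$ must be converted, so one has to select a subfamily of the relevant pieces of total measure approximately $\lambda$ times that of the set carrying $X_{i_0}$ (possible since the pieces are countably many and can be further subdivided); your sketch implicitly treats every splitting as a full one. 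With these two corrections the error-allocation scheme you describe does close the argument.
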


Operatively, what one usually does is the following. Starting with a matrix $A \in \Sym(2)$, one \emph{splits} $A$ into $B$ and $C$ with weight $\lambda$ if
\[
\rank(B-C) = 1 \text{ and } A = \lambda B + (1-\lambda)C. 
\]
We will use the short-hand notation:
\begin{equation}\label{splitmap}
\delta_A \mapsto \nu_1 = \lambda \delta_B + (1-\lambda)\delta_C.
\end{equation}
By definition, $\nu_1$ is a laminate of finite order. One can then reiterate this reasoning by splitting $C$ into $D$ and $E$ with weight $\mu$, provided
\[
\rank(D - E) = 1 \text{ and } C = \mu D + (1-\mu) E,
\] 
so that
\[
\delta_C \mapsto \mu\delta_D + (1-\mu)\delta_E.
\]
By construction, the measure
\[
\nu_2 \doteq \lambda \delta_B + (1-\lambda)\mu\delta_D + (1-\lambda)(1-\mu)\delta_E.
\]
is again a laminate of finite order. Before concluding this section we show one last result, which will be useful in the proof of Theorem \ref{counterex}.

\begin{lemma}\label{tecbound}
Let $\Omega \subset \R^2$ be open and let $u: \Omega \to \R$ be a $W^{2,1}(\Omega)$ piecewise quadratic function. Let $\{\Omega_n\}$ be the collection of pairwise disjoint open sets such that
\[
\left|\Omega\setminus \bigcup_n\Omega_n\right| = 0
\]
and $\left. u\right|_{\Omega_n}$ is a polynomial of second order. Define, for any Borel $F \subset \Sym(2)$, the set
\[
E \doteq \{x \in \Omega: \text{x is a Lebesgue point for }D^2u(x) \text{ and } D^2u(x) \in F\},
\]
Then,
\begin{equation}\label{cons1}
\left|E \setminus \bigcup_{n: \Omega_n\cap E \neq \emptyset}\Omega_n\right| = 0
\end{equation}
and
\begin{equation}\label{cons2}
|\partial E \cap \Omega| = 0.
\end{equation}
\end{lemma}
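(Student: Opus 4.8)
The plan is to use that, on each $\Omega_n$, $u$ coincides with a second order polynomial, so that (thanks precisely to the hypothesis $u\in W^{2,1}$) the $L^1$ Hessian $D^2u$ is constant there; consequently $E$ intersects every $\Omega_n$ either in all of it or not at all, and both \eqref{cons1} and \eqref{cons2} reduce to the elementary fact that $\bigl|\Omega\setminus\bigcup_n\Omega_n\bigr| = 0$.

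In detail, fix $n$, let $p_n$ be the second order polynomial with $\left.u\right|_{\Omega_n} = \left.p_n\right|_{\Omega_n}$, and set $M_n\doteq D^2 p_n\in\Sym(2)$. Since $\Omega_n$ is open and $u\in W^{2,1}(\Omega)$, the distributional Hessian of $u$ restricted to $\Omega_n$ coincides with that of $p_n$, i.e.\ $D^2u = M_n$ almost everywhere on $\Omega_n$. Hence for every $x_0\in\Omega_n$ and every $r>0$ with $B_r(x_0)\subset\Omega_n$ one has $\int_{B_r(x_0)}|D^2u - M_n|\,dx = 0$, so $x_0$ is a Lebesgue point of $D^2u$ with value $M_n$. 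This gives the dichotomy
\[
E\cap\Omega_n = \begin{cases}\Omega_n,& M_n\in F,\\ \emptyset,& M_n\notin F.\end{cases}
\]
I would then introduce the two open sets $G\doteq\bigcup_{n:\,M_n\in F}\Omega_n$ and $H\doteq\bigcup_{n:\,M_n\notin F}\Omega_n$. They are disjoint, $G\cup H = \bigcup_n\Omega_n$ (so $\Omega\setminus(G\cup H)$ is Lebesgue null), and by the dichotomy $G\subseteq E$ while $E\cap H=\emptyset$.

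From here the two conclusions follow. For \eqref{cons1}: the dichotomy says $\Omega_n\cap E\neq\emptyset$ exactly when $M_n\in F$, so $\bigcup_{n:\,\Omega_n\cap E\neq\emptyset}\Omega_n = G$; moreover if a point of $E\setminus G$ belonged to some $\Omega_m$ then $M_m\in F$, whence that point would lie in $\Omega_m\subseteq G$, a contradiction. Thus $E\setminus G\subseteq\Omega\setminus\bigcup_n\Omega_n$, which is null. For \eqref{cons2}: $G$ is open and contained in $E$, so $G\subseteq\intt E$; $H$ is open and disjoint from $E$, so $H\cap\clos E=\emptyset$. Therefore $\partial E\cap\Omega = (\clos E\setminus\intt E)\cap\Omega$ is disjoint from both $G$ and $H$, i.e.\ $\partial E\cap\Omega\subseteq\Omega\setminus\bigcup_n\Omega_n$, which is again null.

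The only delicate point — and essentially the only place where there is anything to check — is the implication "$u = p_n$ on $\Omega_n$ $\Rightarrow$ $D^2u = M_n$ a.e.\ on $\Omega_n$, with every point of $\Omega_n$ a Lebesgue point of $D^2u$": this is exactly where the hypothesis $u\in W^{2,1}$ is used, since for a merely continuous, piecewise quadratic $u$ the distributional Hessian could carry a singular part supported on $\Omega\setminus\bigcup_n\Omega_n$ (indeed, the existence of such $u$ is the content of Theorem~\ref{counterex}), and then "$D^2u$" would not even be a function. Once this is in place, the rest is elementary set topology combined with $\bigl|\Omega\setminus\bigcup_n\Omega_n\bigr| = 0$.
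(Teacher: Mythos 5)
Your proof is correct and follows essentially the same route as the paper: your dichotomy $E\cap\Omega_n\in\{\emptyset,\Omega_n\}$ is exactly the paper's key observation (their property \eqref{openness}), and both arguments then reduce \eqref{cons1} and \eqref{cons2} to the nullity of $\Omega\setminus\bigcup_n\Omega_n$. The only cosmetic differences are that you prove \eqref{cons2} directly via the open sets $G$ and $H$ rather than by contradiction, and you spell out the $W^{2,1}$/Lebesgue-point justification that the paper leaves implicit.
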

\begin{proof}
Notice that, since $u|_{\Omega_n}$ is a polynomial of second order, $E$ enjoys the property
\begin{equation}\label{openness}
E\cap \Omega_n \neq \emptyset \Rightarrow  \Omega_n \subset E, \quad \forall n\in \N.
\end{equation}
Thus, either $|E| = 0$, in which case \eqref{cons1} is obvious, or $|E| > 0$. In this case, \eqref{openness} shows
\[
\bigcup_{n: \Omega_n\cap E \neq \emptyset}\Omega_n \subset \intt E \subset E
\]
and \eqref{cons1} is immediate. To show \eqref{cons2}, suppose by contradiction that for some $F$,
\[
|\partial E \cap \Omega| > 0. 
\]
In this case we must have that, for some $m \in \N$,
\[
\Omega_m \cap \partial E \cap \Omega \neq \emptyset.
\]
Let $x \in \Omega_m\cap \partial E \cap \Omega$. On one hand, we must have that $x\notin E$, since otherwise \eqref{openness} yields $\Omega_m \subset E$ and hence $x \notin \partial E$. On the other hand, since $\Omega_m$ intersects $\partial E$, then $\Omega_m$ must intersect $E$ and hence $x \in E$ by \eqref{openness}, which yields the required contradiction.
\end{proof}

\section{Proof of Theorem \ref{counterex}}
We divide the proof into steps.
\\
\\
\fbox{Construction of the laminates.}
We will work in the space of diagonal matrices, where we use coordinates
\[
(x,y) \in \R^2 \mapsto
\left(
\begin{array}{cc}
x & 0 \\
0 & y
\end{array}
\right).
\]
Notice that in the space of diagonal matrices two matrices $(x,y)$ and $(a,b)$ are rank-one connected if and only if $x = a$ or $y = b$. We define the laminate of finite order $\nu_n$ in the following way. First, we let for all $n \ge 1$:
\begin{equation}\label{explicit}
x_n = 4^{-n}, \quad y_n = 2^n, \quad b_n = \frac{3}{4}2^{-n}, \quad z_n = 4^{-n}.
\end{equation}
Moreover, set
\[
x_0 \doteq \frac{1}{2}, \quad y_0 \doteq \frac{1}{2}, \quad b_0 \doteq \frac{3}{4}, \quad z_0 = \frac{1}{8}.
\]
Observe that $0< x_{n + 1} < x_n < b_n$ for all $n \in \N$. Thus, we can first split horizontally
\[
\delta_{(x_n,y_n)} \mapsto \frac{b_n - x_n}{b_n - x_{n + 1}}\delta_{(x_{n+1}, y_n)} + \frac{x_n - x_{n + 1}}{b_n - x_{n + 1}}\delta_{(b_n,y_n)}.
\]
 Next, since $0 < z_n < y_n < y_{n + 1}$ for all $n\in \N$, we are allowed to split vertically
\[
\delta_{(x_{n+1}, y_n)} \mapsto \frac{y_n - z_n}{y_{n + 1} - z_n}\delta_{(x_{n+1}, y_{n + 1})} + \frac{y_{n + 1} - y_n}{y_{n + 1} - z_n}\delta_{(x_{n+1}, z_n)}.
\]
By construction, the probability measure
\[
\nu_n \doteq \frac{b_n - x_n}{b_n - x_{n + 1}}\frac{y_n - z_n}{y_{n + 1} - z_n}\delta_{(x_{n+1}, y_{n + 1})} + \frac{b_n - x_n}{b_n - x_{n + 1}}\frac{y_{n + 1} - y_n}{y_{n + 1} - z_n}\delta_{(x_{n+1}, z_n)} + \frac{x_n - x_{n + 1}}{b_n - x_{n + 1}}\delta_{(b_n,y_n)}
\]
is a laminate of finite order. 
 Let for all $n \in \N$
\[
A_n = \diag(x_n,y_n),\quad B_n = \diag(x_{n + 1},z_n), \quad C_n = \diag(b_n,y_n)
\]
and
\begin{equation}\label{alphai}
\alpha_n \doteq \frac{b_n - x_n}{b_n - x_{n + 1}}\frac{y_n - z_n}{y_{n + 1} - z_n}, \quad \beta_n\doteq  \frac{b_n - x_n}{b_n - x_{n + 1}}\frac{y_{n + 1} - y_n}{y_{n + 1} - z_n}, \quad \gamma_n \doteq \frac{x_n - x_{n + 1}}{b_n - x_{n + 1}}.
\end{equation}
\fbox{The base step.}
The reasoning to construct the function $u$ of the statement of the present Theorem is inductive. We start with 
\[
u_0(z) \doteq \frac{1}{2}A_0z\cdot z, \quad \forall z \in \R^2.
\]
Next we define:
\begin{equation}\label{epsn}
\eps_n \doteq 10^{-n}.
\end{equation}
We can consider $u_1: B_1 \to \R$, which is obtained employing Lemma \ref{ind} with $\nu_0$ and $\eps_1$ in place of $\eps$. In other words, $u_1 \in W^{2,\infty}$ is a piecewise quadratic function which satisfies:
\begin{itemize}
\item $u_1 = u_0$, $Du_1 = Du_0$ on $\partial B_1$ and $\|u_1 - u_0\|_{C^1(\overline B_1)} \le \eps_1$;
\item $D^2u_1 \in B_{\eps_1}(A_1)\cup B_{\eps_1}(B_0) \cup B_{\eps_1}(C_0)$ a.e. in $B_1$;
\item $|\{z \in B_1: D^2u_1(z) = A_1\}| \ge \alpha_0(1-\eps_1)|B_1|.$
\item $|\{z \in B_1: D^2u_1(z) \in B_{\eps_1}(A_1)\}| \le (1+\eps_1)\alpha_0|B_1|.$
\end{itemize}
Consider the set $E_1 = \{z \in \Omega_0 \doteq B_1: D^2u_1(z) = A_1\}$. Since $|E_1| >0$, Lemma \ref{tecbound} allows us to choose another open set $\Omega_1 \subset E_1$ which is a finite union of balls whose closures are pairwise disjoint and
\[
|E_1\setminus \Omega_1| \le \eps_1|E_1|.
\]
Since $u_1$ is piecewise quadratic, we can subdivide $\Omega_1$ into countably many smaller open sets $\{F_n\}_{n \in \N}$ on which $u_1$ is quadratic. Let $u_1(z) = \frac{1}{2}A_1z\cdot z + b\cdot z + c$ on $F_n$ for some $b \in \R^2, c\in \R$. Then, we can replace $u_1$ on $F_n$ using a function $w_{2,n}$ given again by Lemma \ref{ind} with $\nu_1$ and $\eps_2$ in place of $\nu$ and $\eps$. We then define
\[
u_2 \doteq
\begin{cases}
u_1 \text{ on }B_1 \setminus \bigcup_{n}\overline {F_n}\\
w_{2,n} \text{ on }\overline {F_n}.
\end{cases}
\]
By Proposition \ref{ind}
\[
w_{2,n} =  \frac{1}{2}A_1z\cdot z + b\cdot z + c \text{ and } Dw_{2,n} =  A_1z + b \text{ on }\partial F_n,
\]
and hence $u_2$ is still in $W^{2,\infty}(B_1)$. We will now set up the inductive procedure.
\\
\\ 
\fbox{The inductive step.}
Suppose we are given an open, non-empty set $\Omega_{j -1} \subset B_1$ and a piecewise quadratic function $u_j \in W^{2,\infty}(B_1)$ satisfying
\[
D^2u_{j} \in \bigcup_{k = 1}^{j} B_{\eps_{k}}(A_{k})\cup  \bigcup_{k = 0}^{j-1}B_{\eps_{k +1}}(B_k)\cup B_{\eps_{k+1}}(C_k) \text{ a.e. in $B_1$}.
\]
Let
\begin{equation}\label{Ejdef}
E_j = \{z \in \Omega_{j-1}: D^2u_j(z) = A_{j}\},
\end{equation}
which we assume to have positive measure. By Lemma \ref{tecbound}, we can choose $\Omega_j = \bigcup_{i = 1}^{N_j}B_{r_i}(x_i)$ with $r_i \le \frac{1}{j}$ for all $i$, $\overline{B_{r_i}(x_i)} \subset E_j$, and
\[
\overline{B_{r_i}(x_i)} \cap \overline{B_{r_k}(x_k)} = \emptyset, \quad \forall i \neq k,
\]
so that
\begin{equation}\label{segment}
\text{$\overline{\Omega_j}$ does not contain segments of length larger than $\frac{2}{j}$}.
\end{equation}
The balls forming $\Omega_j$ are chosen in such a way that
\begin{equation}\label{close}
|E_j\setminus \Omega_j| \le \eps_j|E_j|.
\end{equation}
Finally notice that
\begin{equation}\label{bdry}
|\partial \Omega_j| = 0.
\end{equation}
We can split $\Omega_j = \bigcup_{n}\Omega_{j,n}$ such that on each open set $\Omega_{j,n}$, $u_j$ is quadratic. In particular, we have
\[
u_j(z) = \frac{1}{2}A_{j}z \cdot z + b_{j,n}\cdot z + c_{j,n} \text{ on }\Omega_{j,n}, \text{ where } A_{j} = \diag(x_{j},y_{j}), b_{j,n} \in \R^2, c_{j,n} \in \R.
\]
On each $\Omega_{j,n}$, we can consider as boundary datum $u_j$ and use the laminate of finite order $\nu_{j}$ in conjunction with Lemma \ref{ind} to construct a map $w_{j,n} \in W^{2,\infty}$ with the following properties:
\begin{enumerate}[(a)]
\item $w_{j,n} = u_j$, $Dw_{j,n} = Du_j$ on $\partial \Omega_{j,n}$ and $\|w_{j,n} - u_j\|_{C^1(\overline{\Omega_{j,n}})} \le \eps_{j +1}$;
\item $D^2w_{j,n} \in B_{\eps_{j + 1}}(A_{j+1})\cup B_{\eps_{j + 1}}(B_{j})\cup B_{\eps_{j + 1}}(C_j)$ a.e. in $\Omega_{j,n}$;
\item furthermore
\[
|\{x \in \Omega_{j,n}: D^2w_{j,n}(x) = A_{j+ 1}\}| \ge (1-\eps_{j + 1})\alpha_{j} |\Omega_{j,n}|
\]
and
\[
|\{x \in \Omega_{j,n}: D^2w_{j,n}(x) \in B_{\eps_{j  + 1}}(A_{j+ 1})\}| \le (1+\eps_{j +1})\alpha_{j} |\Omega_{j,n}|.
\]
\end{enumerate}
Thus, if we define
\begin{equation}\label{uj+1}
u_{j + 1}(x) \doteq \begin{cases}
u_j(x), \text{ if } x \in B_1 \setminus \overline{\Omega_j}\\ 
w_{j,n}(x), \text{ if } x \in \overline{\Omega_{j,n}},
\end{cases}
\end{equation}
 we see that $u_{j + 1} \in W^{2,\infty}(B_1)$ enjoys the following properties:
\begin{enumerate}[(i)]
\item\label{uj} $u_{j + 1} = u_j$, $Du_{j + 1} = Du_j$ on $\partial B_1$ and $\|u_{j + 1} - u_j\|_{C^1(\overline{B_1})} \le \eps_{j +1}$;
\item\label{convv} $D^2u_{j + 1} \in \bigcup_{k = 1}^{j + 1} B_{\eps_{k}}(A_{k})\cup \bigcup_{k = 0}^j B_{\eps_{k+1}}(B_k)\cup B_{\eps_{k+1}}(C_k)$ a.e. in $B_1$;
\item\label{measures} furthermore 
\[
|\{x \in \Omega_j: D^2u_{j+1}(x) = A_{j+ 1}\}| \ge (1-\eps_{j + 1})\alpha_{j} |\Omega_{j}|
\]
and
\[
|\{x \in \Omega_j: D^2u_{j+1}(x) \in B_{\eps_{j  +1}}(A_{j+ 1})\}| \le (1+\eps_{j +1})\alpha_{j} |\Omega_{j}|.
\]
\end{enumerate}
\fbox{Definition of the required map and first properties.}
The inductive scheme gives us a sequence $\{u_j\}_{j \in \N}$. Extend each function $u_j$ to $B_3$ by taking
\[
\bar u_j \doteq \begin{cases}
u_j, \text{ in }\overline{B_1}\\
u_0, \text{ in }B_3\setminus \overline{B_1}.
\end{cases}
\]
First, by \eqref{uj} we have that
\[
\sum_{j=1}^\infty\|\overline u_{j + 1} - \overline u_j\|_{C^1(\overline{B_2})} < + \infty.
\]
Thus we can define $\overline u_\infty = \lim_{j}\overline u_j$, where the limit is taken in the $C^1(\overline B_2)$ topology. As for $u_j$, we will denote $u_\infty$ the restriction of $\overline u_\infty$ to $\overline B_1$. Moreover, 
\[
u_\infty = u_0 = \frac{1}{4}(x^2 + y^2) = \frac{1}{4} \text{ on $\partial B_1$}
\]
again by \eqref{uj}. We claim that $u \doteq u_\infty - \frac{1}{4}$ gives the required counterexample. We will always work with $u_\infty$ in the following, since anyway the subtraction of the constant $\frac{1}{4}$ does not change the properties we are interested in. Due to our choice of $\{\eps_j\}_{j \in \N}$, $\{x_j\},\{y_j\},\{b_j\}$ and $\{z_j\}$, see \eqref{explicit}-\eqref{epsn}, and exploiting \eqref{convv}, we also find $\rho_j >0$ such that
\begin{equation}\label{ro}
D^2\overline u_j(x) \ge \rho_j\id
\end{equation}
a.e. in $B_3$ in the sense of quadratic forms. In particular, this yields that $\overline u_j$ is convex for all $j$ and thus $\overline u_\infty$ is convex too. Moreover in the sense of measures
\begin{equation}\label{wconv}
D^2\overline u_j \overset{*}{\rightharpoonup} D^2\overline u_\infty, \text{ in } B_2.
\end{equation}
Since $u_j \in W^{2,\infty}(B_1)$ for all $j$ we have
\[
\mu_{u_j} = \det(D^2u_j)dz
\]
and once again our choice of $\{\eps_j\}_j, \{x_j\}, \{y_j\}, \{b_j\}$ and $\{z_j\}$ in conjunction with \eqref{convv} yield
\[
  0 <  \det(D^2u_j)(x) < 1 \text{ a.e. in }B_1.
\]
By \cite[Proposition 2.6]{FIG}, it follows that $\mu_{u_\infty}$ is the weak-$*$ limit of $\mu_{u_j}$ and it is therefore represented as
\[
\mu_{u_\infty} = fdz, \text{ and }0 \le f \le 1.
\]
Up to now, we have showed that $u_\infty$ enjoys the following properties:
\begin{itemize}
\item $u_\infty$ is convex;
\item $u_\infty \in C^1(\overline {B_1})$ and $Du \in BV\cap C^0(\overline {B_1})$;
\item $u_\infty = \frac{1}{4}$ on $\partial B_1$, so that $u = 0$ on $\partial B_1$;
\item $\mu_{u_\infty} = fdz$ and $0 \le f \le 1$ a.e..
\end{itemize}
To conclude, we still need to show that
\begin{itemize}
\item $0 < f$ a.e.;
\item $u_\infty$ is strictly convex;
\item $D^2u_\infty$ is singular with respect to the Lebesgue measure.
\end{itemize}
In order to so, we define the closed set
\[
\Omega_\infty \doteq \bigcap_{j=1}^\infty \overline \Omega_j.
\]
We claim that
\begin{equation}\label{claim1}\tag{Claim 1}
|\Omega_j| \overset{\eqref{bdry}}{=} |\overline \Omega_j| \to 0 \text{ as } j \to \infty
\end{equation}
and that there exists $c > 0$ such that
\begin{equation}\label{claim2}\tag{Claim 2}
c \le \int_{\Omega_j}\partial_{22}u_{j}  \text{ for all } j.
\end{equation}
First assume these claims and let us show how they allow us to conclude the proof.
\\
\\
\fbox{$f > 0$ a.e..} \eqref{claim1} implies that $|\Omega_\infty| = 0$. Thus it suffices to prove that $f > 0$ a.e. on $B_1\setminus \Omega_\infty = \bigcup_{j}(B_1\setminus\overline{\Omega_j})$. On the open set $B_1\setminus \overline{\Omega_{j}}$ we have, by \eqref{uj+1} and the fact that $\Omega_{j + 1} \subset \Omega_j$ for all $j \in \N$,
\[
u_\infty = u_j,
\]
and hence, since $u_j \in W^{2,\infty}$,
\[
\mu_{u_\infty} = fdz = \mu_{u_j} = \det(D^2u_j)dz \overset{\eqref{ro}}{>} 0, \text{ a.e. on }  B_1\setminus\overline{\Omega_{j}}.
\]
\\
\\
\fbox{$u_\infty$ is stricly convex.} Suppose by contradiction this does not hold. Then there exists a segment $\sigma \subset B_1$ such that $u_\infty$ is affine on $\sigma$. First, we note that
\[
\sigma \not\subset \Omega_\infty.
\]
Indeed, if by contradiction $\sigma \subset \Omega_\infty$, then by definition $\sigma \subset \overline \Omega_j$ for all $j \in \N$. This is in contradiction with our construction of $\Omega_j$, see \eqref{segment}, for $j$ large enough. Since $\Omega_\infty$ is closed, we find another (nontrivial) segment
\[
\sigma' \subset \sigma
\]
 with
\[
\sigma' \cap \Omega_\infty = \emptyset.
\]
Thus
\[
\sigma' = \sigma' \cap \bigcup_{j}(B_1\setminus\overline \Omega_j).
\]
Let $x_0 \in \sigma'$ and $j_0 \in \N$ such that $x_0 \in B_1\setminus\overline \Omega_{j_0}$. By \eqref{uj+1}, we find a small ball $B_\eps(x_0) \subset B_1$ such that $u_\infty = u_{j_0}$ on $B_\eps(x_0)$. Thus,
\[
u_\infty = u_{j_0} \text{ on } B_\eps(x_0) \cap \sigma' \neq \emptyset.
\]
By \eqref{ro}, $u_{j_0}$ is strictly convex on $B_\eps(x_0)$, which is in contradiction with the fact that $u_\infty$ is affine on $\sigma$. It follows that $u_\infty$ is strictly convex in $B_1$.
\\
\\
\fbox{$D^2u_\infty$ is singular.} It suffices to prove that $\partial_{22}u_\infty$ is singular. Notice that by convexity, $\partial_{22}u_j,\partial_{22}u_\infty \ge 0$ in the sense of measures. Now, by construction, $\Omega_{j + 1} \subset \Omega_j$ for all $j \in \N$. Fix $j$ and take any $\ell > j$. Then:
\[
c \overset{\eqref{claim2}}{\le} \int_{\Omega_{\ell}}\partial_{22} u_{\ell} dz\le \int_{\Omega_j}\partial_{22} u_{\ell} dz \overset{\eqref{bdry}}{=} \int_{\overline\Omega_j}\partial_{22} u_{\ell} dz.
\]
By \eqref{wconv}, $\partial_{22} u_\ell$ weakly-$*$ converges in the sense of measures to $\partial_{22}u_\infty$ and hence by \cite[Theorem 1.40(ii)]{EVG}, we may use the previous chain of inequalities to conclude that, for all $j \in \N$,
\[
c \le \limsup_{\ell \to \infty}\int_{\overline\Omega_j}\partial_{22} u_{\ell} dz \le \partial_{22}u_\infty(\overline \Omega_j).
\]
Exploiting once more \eqref{claim1}, we conclude that $\partial_{22}u_\infty$ and hence $D^2u_\infty$ is singular with respect to the Lebesgue measure.
\\
\\
We now turn to the proof of the claims.
\\
\\
\fbox{Proof of \eqref{claim1}.}
We wish to show that there exist constants $c_1, c_2 >0$ such that
\begin{equation}\label{almostmeas}
c_1 \le \liminf_{j \to \infty}|\Omega_j|y_j\le \limsup_{j \to \infty}|\Omega_j|y_j \le c_2.
\end{equation}
This would imply \eqref{claim1}. We start by noticing that
\begin{equation}\label{aconv}
k_0 =  \prod_{i =0}^\infty (1-\eps_{i + 1})^{2} \in (0,\infty), \quad k_0' = \prod_{i =1}^\infty (1+\eps_{i}) \in (0,\infty),
\end{equation}
\begin{equation}\label{k1k2}
 k_1 \doteq \prod_{i = 1}^\infty  \frac{b_i - x_i}{b_i - x_{i + 1}} = \prod_{i = 1}^\infty  \frac{1- \frac{x_i}{b_i}}{1 - \frac{x_{i + 1}}{b_i}}  \in (0,\infty),\quad k_2 \doteq  \prod_{i = 1}^\infty\frac{1 - \frac{z_i}{y_i}}{1- \frac{z_i}{y_{i + 1}}} \in (0,\infty).
\end{equation}
To prove these  statements recall the following elementary fact: if $\{a_i\}_i$ is a sequence of real numbers, with $1 + a_i > 0$ for all $i \in \N$, then 
in order for
\begin{equation}\label{res}
\lim_{j\to\infty}\prod_{i = 1}^j(1 + a_i)\in (0,+\infty)
\end{equation}
it is sufficient that
\begin{equation}\label{suff}
\sum_{i=1}^\infty|a_i| < + \infty.
\end{equation}
By \eqref{explicit}-\eqref{epsn} we immediately obtain \eqref{suff} in the cases considered in \eqref{aconv}-\eqref{k1k2}, so these hold. Having shown \eqref{aconv}-\eqref{k1k2}, we can move to \eqref{almostmeas}. From the first line of \eqref{measures} and by \eqref{Ejdef}, we find
\[
(1-\eps_{j + 1})\alpha_j|\Omega_j| \le |E_{j + 1}|.
\]
Using \eqref{close}, we further get
\begin{equation}\label{indomega}
(1-\eps_{j + 1})\alpha_{j}|\Omega_j| \le (1 - \eps_{j + 1})^{-1}|\Omega_{j + 1}|, \quad \forall j \in \N.
\end{equation}
Inductively we can use \eqref{indomega} to write for all $j \in \N$
\[
k_0\left(\prod_{i = 1}^{j-1}\alpha_i\right)|\Omega_1| = \left(\prod_{i =0}^\infty (1-\eps_{i + 1})^2\right) \left(\prod_{i = 1}^{j-1}\alpha_i\right)|\Omega_1| \le |\Omega_j|.
\]
Moreover, using the fact that $\Omega_j \subset E_j$ for all $j \in \N$ we find:
\begin{equation}\label{alfaj}
|\Omega_{j + 1}| \le |E_{j + 1}| \overset{\eqref{Ejdef}}{\le} |\{x \in \Omega_{j}: D^2u_{j + 1}(x) \in B_{\eps_{j + 1}}(A_{j + 1})\}| \overset{\eqref{measures}}{\le} \alpha_{j}(1+\eps_{j +1})|\Omega_j|, \quad \forall j \in \N.
\end{equation}
Using \eqref{alfaj} inductively, we obtain
\[
|\Omega_j| \le |\Omega_1|\prod_{i = 1}^{j-1}\alpha_i\prod_{i = 2}^{j}(1+\eps_{i}) \le |\Omega_1|\prod_{i = 1}^{j-1}\alpha_i\prod_{i = 2}^{\infty}(1+\eps_{i})  \le k_0'|\Omega_1|\prod_{i = 1}^{j-1}\alpha_i, \quad \forall j \in \N.
\]
As $|\Omega_1| > 0$, in order to conclude \eqref{almostmeas} we only need to study the asymptotic behaviour of $\prod_{i = 1}^j\alpha_i$. We have:
\[
\lim_{j \to \infty}y_{j + 1}\prod_{i = 1}^{j}\alpha_i \overset{\eqref{alphai}}{=} \lim_{j \to \infty}y_{j + 1}\prod_{i = 1}^j\frac{b_i - x_i}{b_i - x_{i + 1}}\frac{y_i - z_i}{y_{i + 1} - z_i} \overset{\eqref{k1k2}}{=} y_1k_1k_2 = 2k_1k_2,
\]
where we used that $y_1 = 2$. This shows \eqref{almostmeas} and concludes the proof of this claim.
\\
\\
\fbox{Proof of \eqref{claim2}.}
By \eqref{Ejdef} and since $\Omega_j \subset E_j$, we have
\[
\int_{\Omega_j}\partial_{22} u_{j}dz = y_{j}|\Omega_j|
\]
and hence \eqref{claim2} follows from \eqref{almostmeas}.

\bibliographystyle{plain}
\bibliography{Area}
\end{document}